\documentclass[11pt]{article}
\usepackage{IEEEtrantools}
\usepackage{mathtools, nccmath}
\usepackage{latexsym}
\usepackage{amsfonts}
\usepackage{amssymb}
\usepackage{psfrag}
\usepackage{graphicx}
\usepackage{epsfig}
\usepackage{amsmath,amsfonts,amsthm}
\usepackage{mathrsfs}
\usepackage{enumerate}
\usepackage{dsfont}
\usepackage{hyperref}
\usepackage{pst-node}
\usepackage{cleveref}
\usepackage{enumitem}
\usepackage{tikz-cd}
\usepackage{multirow}
\usepackage{tikz}
\usetikzlibrary{arrows,decorations.markings}
\usepackage{bookmark}
\usepackage{hyperref}
\hypersetup{
     colorlinks   = true,
     citecolor    = blue
}

\newcommand\restr[2]{{
  \left.\kern-\nulldelimiterspace 
  #1 
  \littletaller 
  \right|_{#2} 
  }}

\newcommand{\littletaller}{\mathchoice{\vphantom{\big|}}{}{}{}}

\newcommand{\be}{\begin{equation}}
\newcommand{\ee}{\end{equation}}
\newcommand{\bea}{\begin{eqnarray}}
\newcommand{\eea}{\end{eqnarray}}
\newcommand{\bean}{\begin{eqnarray*}}
\newcommand{\eean}{\end{eqnarray*}}
\newcommand{\brray}{\begin{array}}
\newcommand{\erray}{\end{array}}
\newcommand{\biearray}{\begin{IEEEarray}{rCl}}
\newcommand{\eiearray}{\end{IEEEarray}}

\renewcommand{\theequation}{\thesection.\arabic{equation}}

\newtheorem{dfn}{Definition}[section]
\newtheorem{thm}[dfn]{Theorem}
\newtheorem{lmma}[dfn]{Lemma}
\newtheorem{ppsn}[dfn]{Proposition}
\newtheorem{crlre}[dfn]{Corollary}
\newtheorem{xmpl}[dfn]{Example}
\newtheorem{rmrk}[dfn]{Remark}

\newcommand{\bdfn}{\begin{dfn}\rm}
\newcommand{\bthm}{\begin{thm}}
\newcommand{\blmma}{\begin{lmma}}
\newcommand{\bppsn}{\begin{ppsn}}
\newcommand{\bcrlre}{\begin{crlre}}
\newcommand{\bxmpl}{\begin{xmpl}}
\newcommand{\brmrk}{\begin{rmrk}\rm}

\newcommand{\edfn}{\end{dfn}}
\newcommand{\ethm}{\end{thm}}
\newcommand{\elmma}{\end{lmma}}
\newcommand{\eppsn}{\end{ppsn}}
\newcommand{\ecrlre}{\end{crlre}}
\newcommand{\exmpl}{\end{xmpl}}
\newcommand{\ermrk}{\end{rmrk}}

\def \qed { \mbox{}\hfill
$\Box$\vspace{1ex}}

\title{Sections and Chapters}

\begin{document}
	
	
	\author{\sc{Keshab Chandra Bakshi, Silambarasan C, Biplab Pal}}

	\title{Regular irreducible inclusions of simple $C^*$-algebras and crossed product structure}

	\maketitle
	
	
\begin{abstract}
We study regular irreducible inclusions $B\subset A$ of simple unital $C^*$-algebras admitting a conditional expectation. We introduce a generalized notion of quasi-basis extending Watatani’s framework and show that such inclusions admit a unitary orthonormal generalized quasi-basis. As a consequence, we prove that every regular irreducible inclusion in this setting is canonically isomorphic to a reduced twisted crossed product of $B$ by its Weyl group. This extends earlier crossed product characterizations beyond the finite-index setting.
\end{abstract}

	\bigskip
	
	{\bf AMS Subject Classification No.:} {\large 46}L{\large 37}\,, {\large 47}L{\large 40}\,, {\large 46}L{\large 05}\,, {\large 43}A{\large 30}\,.
	
	{\bf Keywords.} Simple $C^*$-algebra, regular inclusions, Watatani index, Bures topology, reduced twisted crossed product.
	\bigskip
	\hypersetup{linkcolor=blue}

\section{Introduction}

Inclusions of C$^*$-algebras provide a natural framework for studying symmetry via normalizers and crossed product constructions. In the finite-index setting, regular irreducible inclusions of simple unital C$^*$-algebras admitting a conditional expectation are known to admit crossed product descriptions by discrete groups (see \cite{BG2}). The corresponding result is well-known for factors: irreducible regular subfactors with a conditional expectation admit crossed product decompositions by discrete groups without any finite-index assumption.

In this paper, we remove the finite-index assumption in the C$^*$-algebraic setting. Let $B \subset A$ be an inclusion of simple unital C$^*$-algebras with a conditional expectation. Recall, the inclusion is called \emph{regular} if the unitary normalizers
\[
{\mathcal{N}}_A(B)=\{u \in \mathcal{U}(A):uBu^*=B\}
\]
generate $A$ as a $C^*$-algebra.

We introduce a generalized quasi-basis extending Watatani's notion via the Bures topology, and prove that every regular inclusion admits a unitary orthonormal generalized quasi-basis.

As a consequence, we obtain a structural characterization: if $B \subset A$ is regular and irreducible, then its Weyl group $W={\mathcal{N}}_A(B)/\mathcal{U}(B)$ admits an outer cocycle action on $B$, and
\[
A \cong B \rtimes_r^{(\alpha,\sigma)} W.
\]

Thus every regular irreducible inclusion arises as a reduced twisted crossed product, extending the finite-index theory. In the context of subfactors, a similar result was obtained by Choda in \cite{Ch2}, building on earlier ideas developed in \cite{Ch}.

We begin with a review  on the Bures topology and reduced twisted crossed products in Section 2. In Section 3, we introduce generalized quasi-bases and establish their existence for regular inclusions. Finally, we prove the crossed product characterization for irreducible inclusions.
\color{black}
\section{Preliminaries}\label{preliminaries}
This section summarizes some basic concepts that will be needed in the sequel. Our treatment is concise, and we refer the reader to the literature for more comprehensive accounts.
\subsection{Bures topology}
We recall the topology introduced by Bures \cite{Bu}, which was subsequently identified by Mercer \cite{Me} as the appropriate framework for studying convergence of Fourier series in crossed products. In this setting, we consider an inclusion $N \subseteq M$ of von Neumann algebras. We begin with the following definition.
\begin{dfn}[\cite{Bu}]
Let $N \subset M$ be an inclusion of von Neumann algebras, an $N$-inner-product on $M$ is a mapping $(x,y) \mapsto [x \mid y]$ from $M \times M$ to $N$ which satisfies the followings, for all $x,y,z\in M$ and all $n,n' \in N$:
\begin{enumerate}
\item[(i)] $[x \mid yn] = [ x \mid y]n$, and 
\quad 
$[xn \mid y] = n^*[x \mid y].
$

\item[(ii)] $[x + y \mid z] = [x \mid z] + [y \mid z].$

\item[(iii)] $[y \mid x] = [x \mid y]^*.$

\item[(iv)] $[x \mid x] > 0 
\quad \text{if } x \neq 0.$

\item[(v)] $[n \mid n'] = n^*n'.$

\item[(vi)] $[xy \mid z] = [y \mid x^*z].$

\item[(vii)] 
For fixed $y \in M$, the mapping $x \mapsto [x \mid y]$
from $M$ to $N$ is ultraweakly continuous.
\end{enumerate}

If condition (iv) is weakened to
$\text{(iv)'} \quad [x \mid x] \ge 0 
\quad \text{for all } x \in M,$
we call $[\cdot \mid \cdot]$ an $N$-semi-inner-product.
\end{dfn}
For each normal state $\omega$ on $N$, define a seminorm $\|\cdot\|_\omega$ on $M$ by 
$\|x\|_\omega^2 = \omega([x \mid x])$ for all $x \in M$. The topology on $M$ induced by the family of seminorms $\{\|\cdot\|_\omega : \omega \in N_{*}\}$ is called the $N$-topology on $M$. It is clear that the $N$-topology is Hausdorff if and only if $[\cdot \mid \cdot]$ defines an $N$-valued inner product.

Let $E \colon M \to N$ be a conditional expectation. Define a map $[\cdot \mid \cdot] \colon M \times M \to N$ by $[x \mid y] = E(x^*y)$. Then $[\cdot \mid \cdot]$ defines an $N$-valued inner product on $M$. The topology on $M$ induced by this inner product will be referred to as the \emph{Bures topology}. For further details on the Bures topology, see \cite{CaSm}.

\begin{dfn}[\cite{Bu}]\label{impformainthm2}
A family $(v_i)$ of partial isometries of $M$ will be called 
orthonormal if\[
[v_i \mid v_j] = 0 
\qquad \text{if } i \ne j,
\]

\[
[v_i \mid v_i] = v_i^* v_i
\qquad \text{if } i = j.
\]
We call such a family an orthonormal basis for $M$ 
if the finite linear combinations $\sum_i v_i n_i,$ with each $n_i \in N, $ are dense in $M$ in the $N$-topology.
\end{dfn}
\begin{ppsn}[\cite{Bu}]\label{impformainthm3}
Let $N \subset M$ be an inclusion of von Neumann algebras, and let 
$[\cdot \mid \cdot]$ be an $N$-inner-product on $M$. 
Suppose that $(v_i)$ is an orthonormal basis for $M$. 
Then every element $x \in M$ admits a decomposition of the form
\[
x = \sum_i v_i [v_i \mid x]\,,
\]
where the sum converges in the $N$-topology.
\end{ppsn}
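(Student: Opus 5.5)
The plan is to establish a Bessel-type inequality and an orthogonal projection structure for finite subfamilies, and then use the density hypothesis to pass to the limit in the $N$-topology. Throughout, write $\|x\|_\omega^2=\omega([x\mid x])$ for a normal state $\omega$ on $N$. I will first record two consequences of the orthonormality relations: $v_i[v_i\mid v_i]=v_iv_i^*v_i=v_i$, and, for $x\in M$, $[v_i\mid x]$ is supported so that $v_i^*v_i[v_i\mid x]=[v_i\mid x]$. The second follows from $[v_i\mid x]=[v_iv_i^*v_i\mid x]=v_i^*v_i[v_i\mid x]$ using property (i) applied with $n=v_i^*v_i$. This uses $[xn\mid y]=n^*[x\mid y]$ together with the fact that $v_i^*v_i=[v_i\mid v_i]\in N$.

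For a finite set $F$ of indices, define $P_F(x)=\sum_{i\in F}v_i[v_i\mid x]$. Using (i), (ii), (iii) and orthonormality, a direct computation gives $[v_j\mid P_F(x)]=v_j^*v_j[v_j\mid x]=[v_j\mid x]$ for $j\in F$. Hence $x-P_F(x)$ is orthogonal to every $v_jn$ with $j\in F$ and $n\in N$. From this I get the Pythagorean identity $[x\mid x]=[x-P_F x\mid x-P_F x]+[P_Fx\mid P_Fx]$. A consequence is the best-approximation property: for any $y=\sum_{i\in F}v_in_i$,
\[
[x-y\mid x-y]=[x-P_Fx\mid x-P_Fx]+[P_Fx-y\mid P_Fx-y]\ \ge\ [x-P_Fx\mid x-P_Fx].
\]
The inequality uses positivity (iv$'$). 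The same orthogonality argument, applied with $F\subseteq G$, shows that $[x-P_Gx\mid x-P_Gx]\le[x-P_Fx\mid x-P_Fx]$.

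Now fix $x$, a normal state $\omega$, and $\varepsilon>0$. By the density hypothesis there is a finite combination $y=\sum_{i\in F_0}v_in_i$ with $\|x-y\|_\omega<\varepsilon$. For any finite $G\supseteq F_0$, the displayed inequality (with $F=G$, since $y$ is also a combination over $G$) gives $\|x-P_Gx\|_\omega\le\|x-y\|_\omega<\varepsilon$. Therefore the net $(P_Fx)_F$, indexed by finite subsets directed by inclusion, converges to $x$ in each seminorm $\|\cdot\|_\omega$, that is, in the $N$-topology. This is precisely the meaning of $x=\sum_i v_i[v_i\mid x]$. The net does not depend on $\omega$, so a single net works for all seminorms at once.

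The step I expect to be the main obstacle is the bookkeeping in the orthogonality computation. Care is needed because the $v_i$ are only partial isometries, which is why the identity $[v_j\mid x]=v_j^*v_j[v_j\mid x]$ is essential. One must also be careful that positivity of $[\cdot\mid\cdot]$ is used only in the operator order on $N$, and that normal states respect this order. Property (vii), ultraweak continuity, is not needed for this argument; it matters only for identifying the Bures topology.
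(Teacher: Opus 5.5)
Your argument is correct and complete. The identity $[v_j\mid x]=v_j^*v_j[v_j\mid x]$ legitimately uses $v_j^*v_j=[v_j\mid v_j]\in N$ together with (i). The remaining steps also check out: the orthogonality $[v_jn\mid x-P_Fx]=0$ for $j\in F$, the best-approximation inequality $[x-y\mid x-y]\ge[x-P_Fx\mid x-P_Fx]$ in the order of $N$, and the passage to each seminorm $\|\cdot\|_\omega$ via positivity of $\omega$. Since the tail estimate holds for every single $\omega$, the net $(P_Fx)_F$ converges in the $N$-topology, as required.

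The paper itself gives no proof of this proposition; it is simply quoted from Bures, so there is no in-paper argument to compare with. Your best-approximation route fills this in well. Density hands you the limit $x$ directly, so you never need to show that the partial sums form a Cauchy net, or that $M$ is complete in the $N$-topology. In general it is not complete: take $N=\mathbb{C}1$ and $[x\mid y]=\varphi(x^*y)$ for a faithful normal state $\varphi$ on an infinite-dimensional $M$. As you observe, only (i)--(iii) and positivity (iv$'$) are used, so the same proof works verbatim for $N$-semi-inner-products. The monotonicity in $F\subseteq G$ that you derive is not actually needed, since you apply the best-approximation inequality directly with $G\supseteq F_0$.
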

\subsection{Reduced twisted crossed product}\label{Reduced twisted crossed product}
Recall that a discrete twisted $C^*$-dynamical system is a quadruple $(A, G,
\alpha, \sigma)$ consisting of a unital $C^*$-algebra $A$, a 
discrete group $G$, a map $\alpha: G \to \mathrm{Aut}(A)$ and a map
$\sigma: G\times G \to \mathcal{U}(A)$ satisfying the following identities:
\begin{eqnarray}
    &&\alpha_g \circ \alpha_h = \sigma(g, h ) \alpha_{gh} \sigma(g,h)^*;\\\label{impincocycledefn}
    &&\sigma (g, h) \sigma (gh, k) = \alpha_g(\sigma(h,k)) \sigma(g, hk);
\text{  and,}\\
&&\sigma (g, e) = \sigma( e, g) = 1
\end{eqnarray}
for all $g, h, k \in G$. Such a $\sigma$ is called a normalized
$\mathcal{U}(A)$-valued $2$-cocycle on $G$, and an $\alpha$ as above is called a
twisted action of $G$ on $A$ with respect to the cocycle $\sigma$. 
Assume that $A$ is represented faithfully and non-degenerately as a $C^*$-algebra
of bounded operators on a Hilbert space $H$. The reduced twisted
crossed product $A \rtimes^r_{(\alpha, \sigma)} G$ (also denoted by $C^*_r(A, G, \alpha, \sigma)$) is then realized as the $C^*$-subalgebra
of $\mathcal{B}(\ell^2(G,H))$ generated by
$\pi_\alpha(A)$ and $\lambda_\sigma(G)$. Here, $\pi_\alpha$ denotes the representation of $A$ on $\ell^2(G,H)$ given by
\[
(\pi_\alpha(a)\xi)(g) = \alpha_{g^{-1}}(a)\,\xi(g),
\quad a \in A,\ \xi \in \ell^2(G,H),\ g \in G,
\]
for each $g \in G$, the unitary $\lambda_g$ on
$\ell^2(G,H)$ is defined by
\[
(\lambda_g\xi)(h) = \sigma(h^{-1}, g)\,\xi(g^{-1}h),
\quad \xi \in \ell^2(G,H),\ h \in G.
\]
We have the following covariant representation of
the twisted dynamical system $(A, G, \alpha, \sigma)$, as follows:
\[
\pi_\alpha(\alpha_g(a)) = \operatorname{Ad}(\lambda_g)(\pi_\alpha(a)),
\]
and
\[
\lambda_g\lambda_h
= \pi_\alpha(\sigma(g,h))\,\lambda_{gh},
\quad a \in A,\ g,h \in G.\]
Moreover, as observed in \cite[p. 552]{Qu} (see also
\cite[Remark 3.12]{PR}), the resulting $C^*$-algebra
$A \rtimes^r_{(\alpha, \sigma)} G$ does not depend, up to isomorphism,
on the particular choice of the Hilbert space $H$. For more on reduced twisted crossed product, we refer the reader to \cite{Be, BeO}.
\begin{rmrk}\label{impinmainthm6}
We record here some properties of the twisted dynamical system $(A, G, \alpha, \sigma)$ that will be used subsequently:
\begin{enumerate}
\item[$(i)$] The condition in \Cref{impincocycledefn} can be expressed equivalently as (see \cite{Be}[eqs.\ (4) and (5)]): for all $g,h,k \in G$,
\[
\alpha_g\big(\sigma(h,k)\big)^*\,\sigma(g,h)
= \sigma(g,hk)\,\sigma(gh,k)^*.
\]

\item[$(ii)$] For each $g \in G$, one has (see \cite{Be}[eq.\ (8)]):
\[
\lambda_g^* = \pi_\alpha\big(\sigma(g^{-1},g)\big)^*\,\lambda_{g^{-1}}.
\]

\item[$(iii)$] For all $g,h \in G$, the following identity holds (see \cite{Be}[eq.\ (9)]):
\[
\lambda_g\,\lambda_h\,\lambda_g^*
= \pi_\alpha\big(\sigma(g,h)\,\sigma(ghg^{-1})^*\big)\,\lambda_{ghg^{-1}}.
\]

\end{enumerate}
\end{rmrk}


\section{Structure of regular inclusions of simple $C^*$-algebras beyond finite index}\label{Structure of regular inclusions}

In this section, we study regular irreducible inclusions $B \subset A$ of simple unital $C^*$-algebras equipped with a conditional expectation $E : A \to B$, without imposing any finiteness assumptions such as finite index. Throughout, we assume that \( E \) is faithful. Our first objective is to establish the existence of a unitary orthonormal generalized quasi-basis for $E$, in a sense to be specified below. We then undertake a structural analysis of such inclusions. In particular, we show that the associated Weyl group $W$ admits an outer cocycle action $(\alpha,\sigma)$ on $B$, and that $A$ is isomorphic to the reduced twisted crossed product $B \rtimes^r_{(\alpha,\sigma)} W$.

\medskip

Recall that, given an inclusion $B \subset A$ of unital $C^*$-algebras with a common identity, the unitary normalizer of $B$ in $A$ is defined by $\mathcal{N}_A(B) = \{u \in \mathcal{U}(A) : u B u^* = B\}.$ The inclusion $B \subset A$ is said to be \emph{regular} if $\mathcal{N}_A(B)$ generates $A$ as a $C^*$-algebra. A natural class of examples arises from reduced twisted crossed products. Indeed, if $A\rtimes^r_{(\alpha,\sigma)}G$ is a reduced twisted crossed product as described in \Cref{Reduced twisted crossed product}, then the inclusion $A\subset A\rtimes^r_{(\alpha,\sigma)}G$ is regular. Furthermore, if $A$ is simple and the action is outer, then \cite[Theorem 3.2]{Be} implies that $A\rtimes^r_{(\alpha,\sigma)}G$ is simple, while \cite[Theorem 5.1]{BeO} shows that the above inclusion is irreducible

Now let $B \subset A$ is an inclusion of simple unital $C^*$-algebras, the unitary group $\mathcal{U}(B)$ forms a normal subgroup of $\mathcal{N}_A(B)$. In analogy with the notion of the Weyl group for inclusions of von Neumann algebras (see \cite[Definition~2.11]{BG1}), we recall the following definition:

\begin{dfn}\label{weyl-defn}
Let $B \subset A$ be an inclusion of unital $C^*$-algebras with common unit. The \emph{Weyl group} associated with the inclusion $B \subset A$ is the quotient group \( \frac{\mathcal{N}_A(B)}{\mathcal{U}(B) } \) and will be denoted by $W(B\subset A)$.
\end{dfn}

The following lemma will play an important role.

\begin{lmma}[\cite{BG2}]\label{impformainthm}
Let $B \subset A$ be an irreducible unital inclusion of simple $C^*$-algebras, and let $w \in \mathcal{N}_A(B) \setminus \mathcal{U}(B) $ be a unitary, then $\mathrm{Ad}_w$ defines an outer automorphism of $B$. Moreover, for any $u, v \in \mathcal{N}_A(B)$ and for any conditional expectation $E:A \to B$, one has $E(vu^*) = 0 = E(v^*u)$ whenever $[u] \neq [v]$ in $W(B \subset A)$.
\end{lmma}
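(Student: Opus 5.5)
We split the statement into its two assertions: outerness of $\mathrm{Ad}_w$, and the vanishing of $E(vu^*)$ and $E(v^*u)$.

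\emph{Outerness.} Suppose, towards a contradiction, that $\mathrm{Ad}_w|_B=\mathrm{Ad}_b|_B$ for some $b\in\mathcal U(B)$. First we pass from unitary implementers to arbitrary ones. If $\mathrm{Ad}_w|_B$ is inner, then there is some $b$ with $wxw^*=bxb^*$ for all $x\in B$. We may take $b$ unitary in $B$; in general an automorphism is called inner exactly when it is implemented by a unitary of $B$, so this is by definition. Then $b^*w$ commutes with every $x\in B$, i.e.\ $b^*w\in B'\cap A=\mathbb C1$ by irreducibility. Hence $w=\lambda b$ with $|\lambda|=1$, so $w\in\mathcal U(B)$, contradicting the choice of $w$. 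Simplicity of $B$ is not really needed for this part.

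\emph{Vanishing of the expectations.} Let $u,v\in\mathcal N_A(B)$ with $[u]\neq[v]$. Then $w:=vu^*\in\mathcal N_A(B)\setminus\mathcal U(B)$, so $\theta:=\mathrm{Ad}_w|_B$ is an outer automorphism of $B$ by the first part. Set $y:=E(w)\in B$. For $x\in B$ we have $wx=\theta(x)w$. Applying the $B$-bimodularity of $E$ gives
\[
yx=E(wx)=E(\theta(x)w)=\theta(x)y ,
\]
so $y$ intertwines $\mathrm{id}$ and $\theta$. We now show $y=0$.

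First, $y^*y$ commutes with $B$: indeed $y^*y x=y^*\theta(x)y=(\theta(x^*)y)^*y=(yx^*)^*y=xy^*y$. So $y^*y$ lies in the center of $B$, which is $\mathbb C1$ since $B$ is simple and unital. Similarly $yy^*\in\theta(B)'\cap B=B'\cap B=\mathbb C1$. Thus $y^*y=c1$ with $c\ge0$. If $c>0$, then $c^{-1/2}y$ is a unitary in $B$ implementing $\theta$, contradicting outerness. Hence $c=0$ and $y=0$, i.e.\ $E(vu^*)=0$.

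The same argument applied to $w':=v^*u$ gives $E(v^*u)=0$. Indeed $w'\in\mathcal N_A(B)$, and $w'\notin\mathcal U(B)$: if $v^*u\in\mathcal U(B)$ then $u\in v\,\mathcal U(B)$, and since $\mathcal U(B)$ is normal in $\mathcal N_A(B)$ this gives $[u]=[v]$, a contradiction.

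The only delicate step is this intertwiner argument, and within it the observation that $y^*y$ is central, hence scalar by simplicity. Everything else is formal.
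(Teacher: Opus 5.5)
Your argument is correct and complete. The paper gives no proof of this lemma; it is quoted from \cite{BG2}, so there is no in-paper argument to compare against, and your write-up supplies a self-contained one. Both halves check out. First, $b^*w$ commutes with $B$ and is therefore a scalar by irreducibility. Second, for $y=E(w)$ the relation $yx=\theta(x)y$ makes $y^*y$ and $yy^*$ central, hence scalar, so a nonzero $y$ would rescale to a unitary of $B$ implementing $\theta$. One small gap: to say $c^{-1/2}y$ is unitary you need $yy^*=c1$ with the same constant $c$, and this follows from $\|yy^*\|=\|y^*y\|$.

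The vanishing statement also has a shorter route that bypasses both the outerness step and simplicity. From $yx=wxw^*y$ you get $(w^*y)x=x(w^*y)$ for all $x\in B$, so $w^*E(w)\in B'\cap A=\mathbb{C}1$, i.e.\ $E(w)=\lambda w$. If $\lambda\neq 0$, then $w=\lambda^{-1}E(w)\in\mathcal{U}(B)$, a contradiction. Even in your route, simplicity of $B$ is used only to get $Z(B)=\mathbb{C}1$, and this already follows from $Z(B)\subseteq B'\cap A$. So the whole lemma holds for any irreducible unital inclusion, with no simplicity of $A$ or $B$ needed. What your version buys instead is a fact intrinsic to $B$ and independent of the ambient $A$: an outer automorphism of a unital $C^*$-algebra with trivial centre has no nonzero intertwiner with the identity.
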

  \subsection{Unitary orthonormal generalized quasi-basis for regular inclusions}\label{Unitary orthonormal generalized quasi-basis}
  It was conjectured in \cite{BG3} and subsequently proved in \cite{CKP} that any finite-index regular inclusion of type $II_1$ factors admits a unitary orthonormal basis. In the $C^*$-algebraic setting, it was shown in \cite{BG2}[Corollary 3.10] that for any regular inclusion $B \subset A$ of simple unital $C^*$-algebras with a finite-index conditional expectation, the minimal conditional expectation $E_0$ admits a unitary orthonormal quasi-basis. Motivated by these results, we consider arbitrary regular irreducible inclusions of simple unital $C^*$-algebras equipped with a conditional expectation, not necessarily of finite index. Our goal is to obtain an analogous result in this setting. To this end, we introduce a generalized notion of quasi-basis, extending Watatani's definition of a quasi-basis (\cite{W}), and show that every such inclusion admits a generalized quasi-basis.
  
  Recall that for any $C^*$-algebra $A$, if $\pi_S$ is the universal representation of $A$, then the enveloping von Neumann algebra is the SOT-closure of $\pi_S(A)$, and is denoted by $A''$. Its double dual $A^{**}$ is isometrically isomorphic to the enveloping von Neumann algebra $A''$ (see \cite{Pe}[Proposition~3.7.8]). 
  
  Also recall that for an inclusion $B \subset A$ of $C^*$-algebras with a conditional expectation $E : A \to B$, $B''$ is naturally a von Neumann subalgebra of $A''$ (see \cite{Pe}[Corollary~3.7.9]), and $E$ extends to a normal conditional expectation $E'' : A'' \to B''$ (see \cite{Li}[Lemma~4.1.4]).

\begin{dfn}\label{Generalized}
Let $B \subset A$ be an inclusion of $C^*$-algebras and 
let $E : A \to B$ be a conditional expectation. 
Denote by $E^{''} : A'' \to B''$ the normal extension 
of $E$ to the enveloping von Neumann algebra. A set $\{\lambda_i\}_{i \in I} \subset A$ is called a 
\emph{generalized (right) quasi-basis} for $E$ if
\[
x = \sum_{i \in I}\lambda_i\, E(\lambda_i^*\,x)
\quad \text{for all } x \in A,
\]
where the sum converges in the Bures topology on $A''$ with respect to the normal conditional expectation $E''$. 
\end{dfn} 
A generalized (right) quasi-basis is said to be \emph{orthonormal} if $E(\lambda_i^*\lambda_j)=\delta_{i,j}$ and \emph{unitary} if all its elements are unitary. Related constructions in the setting of von Neumann algebras were studied in \cite{FI,Po}. Now let $W$ denote the Weyl group of the
inclusion $B \subset A$ with a fixed set of left coset
representatives $\{u_g : g \in W\}$ in $\mathcal{N}_A(B)$, with $u_e =
1$. Then we have the following:
\begin{thm}\label{mainthm}
Let $B \subset A$ be a regular irreducible unital inclusion of simple $C^*$-algebras, equipped with a conditional expectation $E : A \to B$, then $\{u_g: g \in W\}$ is a unitary orthonormal generalized (right) quasi-basis for the conditional expectation $E$.
 \end{thm}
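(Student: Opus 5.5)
Unitarity is immediate, since each $u_g$ lies in $\mathcal{N}_A(B)\subset\mathcal{U}(A)$. Orthonormality follows from \Cref{impformainthm}. For $g\neq h$ the classes $[u_g]\neq[u_h]$ in $W(B\subset A)$, so $E(u_g^*u_h)=0$. For $g=h$ we have $E(u_g^*u_g)=E(1)=1$. Hence $\{u_g\}$ is an orthonormal family. In Bures' language (\Cref{impformainthm2}) it is an orthonormal family of unitaries for the $B''$-inner product $[x\mid y]=E''(x^*y)$ on $A''$, because $E''$ restricts to $E$ on $A$ and $u_g^*u_g=1$.

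The plan is then to show that $\{u_g\}$ is an orthonormal \emph{basis} of $A''$ in the Bures topology. Once that holds, \Cref{impformainthm3} gives, for every $x\in A\subset A''$,
\[
x=\sum_{g\in W}u_g\,E''(u_g^*x)=\sum_{g\in W}u_g\,E(u_g^*x),
\]
with convergence in the Bures topology. This is exactly the required quasi-basis identity.

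To show that the $B''$-span of $\{u_g\}$ is Bures dense, I would argue in three steps.

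\begin{enumerate}
\item[(a)] Regularity means $A$ is the norm closure of the $*$-algebra generated by $\mathcal{N}_A(B)$. A product of normalizers is again a normalizer, and $u^*\in\mathcal{N}_A(B)$ whenever $u\in\mathcal{N}_A(B)$. So this $*$-algebra is the linear span of $\mathcal{N}_A(B)$.
\item[(b)] Each $u\in\mathcal{N}_A(B)$ has the form $u=u_gb$ with $g=[u]$ and $b=u_g^*u\in\mathcal{U}(B)$. It follows that $A_0:=\operatorname{span}\{u_gb: g\in W,\ b\in B\}$ is norm dense in $A$.
\item[(c)] Norm convergence implies Bures convergence, since $\omega(E''((x-y)^*(x-y)))\le\|x-y\|^2$. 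Also $B$ is ultraweakly dense in $B''$, and $A$ is ultraweakly dense in $A''$.
\end{enumerate}

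The main obstacle is passing from norm density of $A_0$ in $A$ to Bures density of the $B''$-span in all of $A''$. The issue is that the Bures seminorms are not obviously continuous for the ultraweak topology on bounded sets. Two routes seem available.

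\emph{Route 1.} Show that on bounded sets the strong$^*$ topology is finer than the Bures topology. If $x_\alpha\to x$ strongly and boundedly, then $(x_\alpha-x)^*(x_\alpha-x)\to 0$ weakly. Normality of $E''$ then gives $\omega(E''((x_\alpha-x)^*(x_\alpha-x)))\to 0$. Combined with Kaplansky density, this shows that the norm closure of $A_0$, namely $A$, is Bures dense in $A''$.

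\emph{Route 2.} Avoid density in $A''$ altogether and verify the expansion directly for $x\in A$. One checks that the partial sums $\sum_{g\in F}u_gE(u_g^*x)$ are Bessel-type projections. Then
\[
\Big[\,x-\sum_F u_gE(u_g^*x)\ \Big|\ x-\sum_F u_gE(u_g^*x)\,\Big]=E(x^*x)-\sum_F E(x^*u_g)E(u_g^*x).
\]
By orthonormality this expression decreases in $F$, and it tends to $0$ because $x$ is a norm limit of elements of $A_0$. For each $y\in A_0$ the expansion is exact, since $E(u_g^*u_hb)=\delta_{g,h}b$.

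I would use Route 2 as the primary argument, since it needs only norm density and the Bessel inequality. The remaining point is to control the error $x-y$ uniformly in $F$; this follows because the partial-sum maps are Bures contractions, by the Bessel inequality.
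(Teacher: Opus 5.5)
Your proposal is correct. Your Route 1 is exactly the paper's proof: there the Bures closure $C$ of $\bigoplus_{g}u_gB$ in $A''$ is shown to contain $A$, because norm convergence implies Bures convergence. It is then shown to equal $A''$ using Kaplansky density and normality of $E''$. Finally Proposition~\ref{impformainthm3} is applied to the orthonormal family supplied by Lemma~\ref{impformainthm}.

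Your primary Route 2 is genuinely different, and it is valid. The identity $E((x-x_F)^*(x-x_F))=E(x^*x)-\sum_{g\in F}E(u_g^*x)^*E(u_g^*x)$ holds. The partial-sum map $P_F$ is exact on elements of $A_0$ supported in $F$. Together these give $\|E((x-x_F)^*(x-x_F))\|^{1/2}\le\|x-y\|$ whenever $y\in A_0$ has support in $F$, since $(I-P_F)$ does not increase $[z\mid z]$.

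So you get convergence of the partial sums in the Hilbert-module norm $\|E(z^*z)\|^{1/2}$. This is uniform over all states of $B$, and hence stronger than the Bures convergence the definition asks for, because $\omega(E''(z^*z))=\omega(E(z^*z))$ for $z\in A$. The argument needs neither the enveloping von Neumann algebra, nor Kaplansky's theorem, nor Bures' Proposition~\ref{impformainthm3}.

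A side benefit is that Route 2 does not rely on the Bures form on $A''$ being a genuine inner product. Proposition~\ref{impformainthm3} is stated for inner products, but the normal extension $E''$ of a faithful $E$ need not be faithful on $A''$.

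What the paper's route buys in exchange is a stronger structural conclusion: $\{u_g\}$ is a Bures orthonormal basis for all of $A''$. The expansion $x=\sum_g u_gE''(u_g^*x)$, with coefficients in $B''$, therefore holds for every $x\in A''$, not only for $x\in A$, which is all that the notion of generalized quasi-basis requires.
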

\begin{prf}
Let $C_0 := \bigoplus_{g \in W} u_g B$ denote the algebraic direct sum. Then every element $x \in C_0$ can be written in the form $x = \sum_{g \in F} u_g b_g,$ where $F \subset W$ is a finite subset and $b_g \in B$ for each $g \in F$. Set $C := \overline{C_0}^{\,B}$ is the closure of $C_0$ in the Bures topology on $A''$ with respect to the normal conditional expectation $E''$. Clearly, $\mathrm{span}\{\mathcal{N}_A(B) \}\subset C \subset A''.$ We claim that $\overline{\mathrm{span}\{\mathcal{N}_A(B)\}}^{\|\cdot\|} \subset C.$ Let $x \in \overline{\mathrm{span}\{\mathcal{N}_A(B)\}}^{\|\cdot\|}$. Then there exists a sequence $(x_n) \subset \mathrm{span}\{\mathcal{N}_A(B)\}$ such that $x_n \xrightarrow{\|\cdot\|} x.$ Since the $\sigma$-strong topology is weaker than the norm topology, it follows that $(x_n - x) \to 0$ in the $\sigma$-strong topology. Consequently,
\[
(x_n - x)^*(x_n - x) \to 0
\]
in the $\sigma$-weak topology. Applying the normal conditional expectation $E$, we obtain
\[
E\big((x_n - x)^*(x_n - x)\big) \to 0
\]
in the $\sigma$-weak topology. Hence, for every normal linear functional $\omega$ on $B''$,
\[
\omega\!\left(E\big((x_n - x)^*(x_n - x)\big)\right) \to 0.
\]
This shows that $x_n \to x$ in the Bures topology on $A''$ with respect to the normal conditional expectation $E''$. Since $C$ is Bures closed, we conclude that $x \in C$. Hence $\overline{\mathrm{span}\{\mathcal{N}_A(B)\}}^{\|\cdot\|} \subset C$. Since $B \subset A$ is regular, we have $A \subset C \subset A''.$ We now claim that $A'' \subset C$. Let $x \in A''$. By Kaplansky's density theorem, there exists a net 
$\{x_i\} \subset A$ with $\|x_i\| \le \|x\|$ for all $i$ such that 
$x_i \to x$ in the strong operator topology. Consequently,
\[
(x_i - x)^*(x_i - x) \to 0
\]
in the weak operator topology. Since on norm-bounded sets the weak operator topology coincides with the $\sigma$-weak topology, it follows that
\[
(x_i - x)^*(x_i - x) \to 0
\]
in the $\sigma$-weak topology. Applying the normal conditional expectation $E$, we obtain
\[
E\big((x_i - x)^*(x_i - x)\big) \to 0
\]
in the $\sigma$-weak topology. Therefore, $x_i \to x$ in the Bures topology on $A''$ with respect to the normal conditional expectation $E''$. Since $C$ is Bures closed and $x_i \in A \subset C$, we conclude that $x \in C$. Hence, $A'' \subset C$, and therefore $C = A''$. 
Invoking Lemma \ref{impformainthm}, we observe that the family 
$\{u_g : g \in W\}$ satisfies all the properties listed in \Cref{impformainthm2}. 
Consequently, it forms an orthonormal basis for $A''$. By Proposition \ref{impformainthm3}, every element $x \in A''$ (and in particular, every $x \in A$) admits a decomposition of the form
\[
x = \sum_{g \in W} u_g\, E(u_g^* x).
\]
It follows that $\{u_g:g\in W\}$ forms a generalized (right) quasi-basis for $E$. This completes the proof.
\qed
\end{prf}

Let $G$ be a discrete group, $H$ a subgroup of $G$, and $\sigma$ a $2$-cocycle on $G$ and $A$ is simple and suppose there is a twisted outer action of $G$ on $A$ with respect to the cocycle $\sigma$. We obtain the following result, motivated by \cite{Te}.

\begin{crlre}
Suppose that the inclusion $A \rtimes^r_{(\alpha,\sigma)} G \subset A \rtimes^r_{(\alpha,\sigma)} H$ is irreducible. Assume further that there exists a conditional expectation $E: A \rtimes^r_{(\alpha,\sigma)} G \to A \rtimes^r_{(\alpha,\sigma)} H.$ Then the inclusion is regular if and only if $H$ is a normal subgroup of $G$.
\end{crlre}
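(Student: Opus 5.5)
Throughout I read the inclusion as $B:=A\rtimes^r_{(\alpha,\sigma)}H\subset C:=A\rtimes^r_{(\alpha,\sigma)}G$, since $H\le G$. Both algebras are simple: the twisted action is outer, its restriction to $H$ is still outer, and \cite[Theorem 3.2]{Be} then applies. So the hypotheses of \Cref{mainthm} and Lemma \ref{impformainthm} are available. The key tools are the Fourier coefficient maps $x\mapsto E_A(x\lambda_g^*)$, where $E_A:C\to A$ is the canonical faithful expectation, and the identity in Remark \ref{impinmainthm6}(iii).

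\emph{($\Leftarrow$)} Assume $H\trianglelefteq G$ and fix $g\in G$. First, covariance gives $\lambda_g\pi_\alpha(a)\lambda_g^*=\pi_\alpha(\alpha_g(a))\in B$. Second, Remark \ref{impinmainthm6}(iii) gives
\[
\lambda_g\lambda_h\lambda_g^*=\pi_\alpha\big(\sigma(g,h)\sigma(ghg^{-1})^*\big)\lambda_{ghg^{-1}}\in B
\]
for every $h\in H$, because $ghg^{-1}\in H$. Hence $\lambda_gB\lambda_g^*\subset B$. Applying the same argument to $g^{-1}$, and using Remark \ref{impinmainthm6}(ii) to write $\lambda_g^*$ as a unitary of $\pi_\alpha(A)$ times $\lambda_{g^{-1}}$, gives equality. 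So each $\lambda_g$ lies in $\mathcal N_C(B)$. Since $C$ is generated by $\pi_\alpha(A)\subset B$ and $\{\lambda_g\}$, the inclusion is regular.

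\emph{($\Rightarrow$)} Assume the inclusion is regular, and for $x\in C$ let $\operatorname{supp}(x)=\{g: E_A(x\lambda_g^*)\neq 0\}$. Note that $B=\{x\in C:\operatorname{supp}(x)\subset H\}$.

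Take $u\in\mathcal N_C(B)$ and $h\in H$. Then $u\lambda_h=b_hu$ with $b_h:=u\lambda_hu^*\in B$. Comparing Fourier coefficients gives $\operatorname{supp}(u)h\subset H\operatorname{supp}(u)$. The left side is an honest translate, because $\lambda_h$ is a unitary and the twist only multiplies coefficients by unitaries of $A$. The right side requires a Bures-convergence argument: expand $b_h$ and $u$ via \Cref{mainthm}/Proposition \ref{impformainthm3} and use normality of $E''$. Doing the same for $u^*$ and $u^*\lambda_hu$ should give the two-sided statement $H\operatorname{supp}(u)=\operatorname{supp}(u)H$. Thus $\operatorname{supp}(u)$ is a union of double cosets $HgH$ with $HgH=gH$, and for such $g$ we have $gHg^{-1}=H$. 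Consequently every normalizer has support in $N_G(H)$.

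The set $\{x\in C:\operatorname{supp}(x)\subset N_G(H)\}$ is the norm-closed $C^*$-subalgebra $A\rtimes^r N_G(H)$, and it contains every normalizer. Regularity then forces this subalgebra to be all of $C$, so $\lambda_g$ for $g\notin N_G(H)$ cannot occur. This forces $N_G(H)=G$, i.e.\ $H$ is normal.

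The main obstacle is the step $HgH=gH\Rightarrow gHg^{-1}=H$ for the \emph{two-sided} support condition. One only gets $gH\subset H\operatorname{supp}(u)$, not $gH\subset Hg$, unless the supports are controlled coset-by-coset. My first attempt will use irreducibility through Lemma \ref{impformainthm}. Orthogonality $E(vu^*)=0$ for inequivalent normalizers lets me reduce to $u$ whose support lies in a single double coset $HgH$. For such $u$, the relation $u\lambda_hu^*\in B$ for all $h$ then gives $\lambda_g$-conjugation mapping $H$ into $H$, so $gHg^{-1}\subset H$. Applying this to $u^*$ gives the reverse inclusion.
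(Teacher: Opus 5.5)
Your ($\Leftarrow$) direction is correct and is essentially the paper's argument. In ($\Rightarrow$) there is a genuine gap at exactly the step you flag, and the proposed repair does not close it.

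\emph{Why the support argument stalls.} From $u\lambda_h=b_hu$ and $\lambda_hu=ub_h'$ you only obtain $H\,\mathrm{supp}(u)=\mathrm{supp}(u)\,H$. This condition holds for $\mathrm{supp}(u)=HgH$ with \emph{every} $g\in G$, since $H(HgH)=HgH=(HgH)H$. So neither ``$HgH=gH$'' nor $\mathrm{supp}(u)\subset N_G(H)$ follows from it.

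\emph{Why Lemma \ref{impformainthm} does not fill the gap.} That lemma compares two \emph{inequivalent normalizers}. You need instead to split the Fourier support of a \emph{single} normalizer $u$. The only natural objects to compare with are the $\lambda_g$, and they are not known to normalize $B$; that is the conclusion you are after. So there is no mechanism for ``reducing to $u$ supported in one double coset''. Even for such $u$, reading $u\lambda_hu^*\in B$ through supports is again compatible with $HgH$ for $g\notin N_G(H)$. Supports, $H$-translations and orthogonality of normalizers cannot exclude this; an operator-algebraic input is required.

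\emph{The missing idea.} This is the ``proceeding as in \cite{Te}'' step of the paper. It uses irreducibility of the \emph{base} algebra, $\pi_\alpha(A)'\cap C=\mathbb{C}$, which follows from outerness and simplicity of $A$ by \cite[Theorem 5.1]{BeO}, in an intertwiner argument.

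Let $u\in\mathcal N_C(B)$, set $v:=\lambda_gu$, and suppose $y:=E(v)\neq 0$.
\begin{enumerate}
\item For $a\in\pi_\alpha(A)$ put $\theta(a):=v^*av=u^*(\lambda_g^*a\lambda_g)u\in B$. Then $av=v\theta(a)$, and $B$-bimodularity of $E$ gives $ay=y\theta(a)$.
\item Hence $yy^*\in\pi_\alpha(A)'\cap C=\mathbb{C}$ and $y^*y\in v^*(\pi_\alpha(A)'\cap C)v=\mathbb{C}$. So $y=cw$ with $c>0$ and $w\in\mathcal U(B)$.
\item Then $w^*aw=v^*av$ for all $a\in\pi_\alpha(A)$, so $vw^*\in\pi_\alpha(A)'\cap C=\mathbb{C}$. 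Therefore $v\in B$, and $\lambda_g=vu^*\in\mathcal N_C(B)$.
\end{enumerate}

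Such a $u$ exists. The paper extracts it from the quasi-basis expansion of $\lambda_g^*$ given by \Cref{mainthm}. Alternatively, norm density of $\mathrm{span}\,\mathcal N_C(B)$ (which is a $*$-algebra) together with $E(\lambda_g\lambda_g^*)=1$ already suffices. Remark \ref{impinmainthm6}(iii), combined with your description of $B$ by supports, then gives $ghg^{-1}\in H$.

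This lemma would also rescue your own route. Take $k\in\mathrm{supp}(u)$. Then $E(\lambda_ku^*)=E(u\lambda_k^*)^*\neq 0$ for the canonical expectation $E$ onto $B$, because $E_A$ factors through it. Applying the lemma to the pair $(\lambda_k,u^*)$ gives $\lambda_k\in\mathcal N_C(B)$, hence $\mathrm{supp}(u)\subset N_G(H)$ directly.
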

\begin{prf}
 First, assume that the inclusion 
$A \rtimes^r_{(\alpha,\sigma)} H \subset A \rtimes^r_{(\alpha,\sigma)} G$ 
is regular. Then, by Proposition \ref{mainthm}, there exists a generalized quasi-basis $\{u_i\}$ in $\mathcal{N}_{A \rtimes^r_{(\alpha,\sigma)} G}(A \rtimes^r_{(\alpha,\sigma)} H)$ for the conditional expectation $E$. In particular for any $g\in G$, we can write
$\lambda_g^* = \sum_{i \in W} u_i\, E(u_i^* \lambda_g^*)$,
where $W$ denotes the Weyl group associated with the inclusion, and the series converges in the Bures topology on $(A \rtimes^r_{(\alpha,\sigma)} G)^{''}$ with respect to the normal conditional expectation $E''$. Hence, there exists an index $i_0$ such that $E(u_{i_0}^* \lambda_g^*) \neq 0$, and consequently $E(\lambda_g u_{i_0}) \neq 0$. Proceeding as in \cite{Te}, it follows that $\lambda_g u_{i_0} \in A \rtimes^r_{(\alpha,\sigma)} H$. Therefore,
$\lambda_g (A \rtimes^r_{(\alpha,\sigma)} H) \lambda_g^* = A \rtimes^r_{(\alpha,\sigma)} H$
for all $g \in G$. In particular, for any $g \in G$ and $h \in H$, we have
$\lambda_g \lambda_h \lambda_g^* \in A \rtimes^r_{(\alpha,\sigma)} H$. By Remark \ref{impinmainthm6}$(iii)$, we know that
$\lambda_g \lambda_h \lambda_g^* = \sigma(g,h)\sigma(ghg^{-1}, g)^* \lambda_{ghg^{-1}}$. It follows that $ghg^{-1} \in H$ for all $g \in G$ and $h \in H$. Hence, $H$ is a normal subgroup of $G$.

Conversely, suppose that $H$ is a normal subgroup of $G$. Then by again using Remark \ref{impinmainthm6}$(iii)$, for any $g \in G$, it is straightforward to verify that 
$\lambda_g \in \mathcal{N}_{A \rtimes^r_{(\alpha,\sigma)} G}(A \rtimes^r_{(\alpha,\sigma)} H)$. 
Consequently, we have the following
 $\overline{\mathrm{span}\{\mathcal{N}_{A \rtimes^r_{(\alpha,\sigma)} G}(A \rtimes^r_{(\alpha,\sigma)} H)\}}^{\|\cdot\|}
= A \rtimes^r_{(\alpha,\sigma)} G$.
Hence, the inclusion 
$A \rtimes^r_{(\alpha,\sigma)} H \subset A \rtimes^r_{(\alpha,\sigma)} G$ 
is regular.
\qed
\end{prf}

\begin{rmrk}
    It is worth noting that, in \cite{BeO}, assuming that \(H\) is a normal subgroup of \(G\), the $C^*$-irreducibility, in the sense of \cite{Ro}, of the inclusion
$A \rtimes^r_{(\alpha,\sigma)} H \subset A \rtimes^r_{(\alpha,\sigma)} G$
was investigated.
\end{rmrk}
\bigskip


\subsection{Charecterization of regular irreducible inclusions of simple $C^*$-algebras}\label{Charecterization of regular irreducible inclusions}
In \cite[Theorem 4]{Ch2}, building on techniques developed earlier in \cite[Theorem 7]{Ch}, Choda proved that for any factor $M$ with separable predual, and for every irreducible regular subfactor $N \subset M$ admitting a faithful conditional expectation from $M$ onto $N$, there exists a countable discrete group $G$ together with an outer cocycle action $(\sigma,\omega)$ on $N$ such that 
$M \cong N \rtimes_{(\sigma,\omega)} G$. In the $C^*$-algebraic setting, it was later shown in \cite{BG2} that if $B \subset A$ is a regular 
irreducible  inclusion of simple unital $C^*$-algebras with a finite index conditional expectation from $A$
onto $B$, then its Weyl group $W$ admits an outer cocycle action $(\alpha, \sigma)$ on $B$ such that $(B
\subset A) \cong (B \subset B \rtimes^r_{(\alpha, \sigma)} W)$. In the present article, we aim to generalize this result to the case where the conditional expectation is not assumed to have finite index. We first record the following observation:

\begin{ppsn}
  Let $B \subset A$ be a regular 
irreducible  unital inclusion of simple $C^*$-algebras with a conditional expectation from $A$
onto $B$.  Then the associated Weyl group $W$ is discrete with respect to the quotient topology induced by the norm topology on $A$.  
\end{ppsn}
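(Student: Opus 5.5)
To show that $W$ is discrete, I will show that every coset $[u]$ is an open point, i.e., an open subset of $W$. Let $q:\mathcal{N}_A(B)\to W$ be the quotient map, where $\mathcal{N}_A(B)$ carries the norm topology and $W$ the quotient topology. Then $q$ is an open map, because $W$ is the quotient of a topological group by a subgroup. It is therefore enough to prove that each coset $u\,\mathcal{U}(B)$ is open in $\mathcal{N}_A(B)$. Equivalently, I will find $\varepsilon>0$ such that every $v\in\mathcal{N}_A(B)$ with $\|u-v\|<\varepsilon$ satisfies $[v]=[u]$. Since $q^{-1}(\{[u]\}) = u\,\mathcal{U}(B)$, the openness of this coset gives that $\{[u]\}$ is open in $W$.

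The key tool is Lemma \ref{impformainthm}: if $[u]\neq[v]$, then $E(u^*v)=0$ for the given conditional expectation $E$. Suppose $\|u-v\|<1$. Then
\[
\|1-u^*v\| = \|u^*(u-v)\| = \|u-v\| < 1 .
\]
Since $E$ is a contractive, unital, $B$-bimodule map,
\[
\|1-E(u^*v)\| = \|E(1-u^*v)\| \le \|1-u^*v\| < 1 .
\]
If $[u]\neq[v]$, then $E(u^*v)=0$, and the left-hand side equals $\|1\|=1$, which is a contradiction. Hence $[v]=[u]$ whenever $\|u-v\|<1$. So the open norm ball of radius $1$ around $u$ in $\mathcal{N}_A(B)$ is contained in $u\,\mathcal{U}(B)$. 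Because $u$ was an arbitrary point of its coset, every coset is open. Moreover, any two distinct cosets are at norm distance at least $1$.

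The one point I expect to need some care is that the quotient map is open, so that openness of cosets really gives openness of singletons in the quotient topology. This is immediate from the definition of the quotient topology: $\{[u]\}$ is open in $W$ exactly when its preimage $u\,\mathcal{U}(B)$ is open in $\mathcal{N}_A(B)$, which we have just shown. Regularity is not needed for this argument. Only irreducibility and simplicity are used, through Lemma \ref{impformainthm}, together with the existence of $E$.
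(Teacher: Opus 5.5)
Your proof is correct and follows essentially the paper's argument: both use Lemma \ref{impformainthm} to show that distinct cosets of $\mathcal{U}(B)$ in $\mathcal{N}_A(B)$ are uniformly separated in norm, so every singleton of $W$ is open. The differences are cosmetic. You use the linear estimate $\|u-v\|=\|1-u^*v\|\ge\|1-E(u^*v)\|$, giving separation $1$, where the paper uses $\|u-vw\|^2\ge\|E((u-vw)^*(u-vw))\|=2$, giving separation $\sqrt{2}$; and you argue directly from the definition of the quotient topology rather than through the quotient metric $\tilde{d}$.
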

\begin{prf}
Endow $\mathcal{N}_A(B)$ with the subspace topology induced by the norm topology on $A$. Then it is straightforward to verify that $\mathcal{N}_A(B)$ is a topological group. Moreover, $\mathcal{U}(B)$ is a closed normal subgroup of $\mathcal{N}_A(B)$. Hence, the Weyl group $W = \frac{\mathcal{N}_A(B)}{\mathcal{U}(B)
   }$ is a topological group with respect to the quotient topology. Furthermore, this topology agrees with the metric topology induced by the metric 
$\tilde{d}([u],[v]) = \inf_{w \in \mathcal{U}(B)} \|u - vw\|$. Now let $[u] \neq [v]$ in $W$. Then, for any $w \in \mathcal{U}(B)$, we have
$\|u - vw\|^2 \geq \|E((u - vw)^*(u - vw))\|$.
Expanding the expression inside $E$, we obtain
$E((u - vw)^*(u - vw)) = E(1 - w^*v^*u - u^*vw + 1)$.
By Lemma \ref{impformainthm}, it follows that
$\|E((u - vw)^*(u - vw))\| = 2$.
Hence, $\|u - vw\| \geq \sqrt{2}$ for all $w \in \mathcal{U}(B)$. Therefore, $\tilde{d}([u],[v]) \geq \sqrt{2}$ whenever $[u] \neq [v]$, which shows that $W$ is discrete.
\qed
\end{prf}
\begin{thm}\label{characterization-irreducible-regular}
  Let $B \subset A$ be a regular 
irreducible  unital inclusion of simple $C^*$-algebras with a conditional expectation from $A$
onto $B$.  Then, its Weyl group $W$ admits an outer cocycle action $(\alpha, \sigma)$ on $B$ such that $(B
\subset A) \cong (B \subset B \rtimes^r_{(\alpha, \sigma)} W)$.
\end{thm}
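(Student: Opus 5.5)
Fix the left coset representatives $\{u_g : g \in W\}\subset\mathcal{N}_A(B)$ with $u_e=1$, as in \Cref{mainthm}. Set
\[
\alpha_g := \mathrm{Ad}_{u_g}|_B, \qquad \sigma(g,h) := u_g u_h u_{gh}^*.
\]
Since $[u_gu_h]=[u_{gh}]$ in $W$, we have $u_gu_hu_{gh}^*\in\mathcal{U}(B)$, so $\sigma$ is $\mathcal{U}(B)$-valued. The three identities in \Cref{Reduced twisted crossed product} follow by direct computation from associativity: $(u_gu_h)u_k = u_g(u_hu_k)$ gives the cocycle identity, and $u_e=1$ gives normalization. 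By \Cref{impformainthm}, each $\alpha_g$ with $g\neq e$ is outer, so $(\alpha,\sigma)$ is an outer cocycle action of $W$ on $B$. By the preceding proposition, $W$ is a discrete group, so the reduced twisted crossed product $B\rtimes^r_{(\alpha,\sigma)}W$ makes sense.

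Next I would build the isomorphism. Let $D:=B\rtimes^r_{(\alpha,\sigma)}W$, with canonical unitaries $\lambda_g$ and the canonical faithful conditional expectation $E_D:D\to B$, $E_D(\sum b_g\lambda_g)=b_e$. Let $A_0:=\mathrm{span}\{u_gb : g\in W,\ b\in B\}$. Every normalizer $v$ equals $u_g b$ with $b\in\mathcal{U}(B)$, so by regularity $A_0$ is a dense $*$-subalgebra of $A$. The linear map $\Phi_0: A_0\to D$, $\Phi_0\big(\sum u_g b_g\big)=\sum \lambda_g\pi_\alpha(b_g)$, is well defined. Indeed, if $\sum_{g\in F} u_gb_g=0$, then applying $E(u_h^*\,\cdot\,)$ and using the orthonormality from \Cref{impformainthm} gives $b_h=0$. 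Because the products are governed by the same pair $(\alpha,\sigma)$ on both sides, $\Phi_0$ is a $*$-homomorphism. Moreover $E_D\circ\Phi_0=E$ on $A_0$, since both pick out the coefficient of $g=e$. The same relations define a $*$-homomorphism $\Psi_0$ in the opposite direction on the dense algebraic crossed product, inverse to $\Phi_0$.

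The main obstacle is extending $\Phi_0$ to the norm closures, i.e.\ showing that $\|\Phi_0(x)\|=\|x\|$. I would compare the GNS representations of $E$ and $E_D$. Let $H$ be a faithful representation of $B$ and form the induced Hilbert module $L^2(A,E)\otimes_B H$. Orthonormality of $\{u_g\}$ gives a unitary $U: \ell^2(W,H)\to L^2(A,E)\otimes_B H$, $\delta_g\otimes\xi\mapsto u_g\otimes\xi$. Its surjectivity is where \Cref{mainthm} enters: the generalized quasi-basis expansion $x=\sum_g u_gE(u_g^*x)$ shows that the $u_g\otimes\xi$ span a dense subspace. Under $U$, left multiplication by $u_gb$ corresponds to $\lambda_g\pi_\alpha(b)$, a check that uses the definitions of $\pi_\alpha,\lambda_g$ and \Cref{impinmainthm6}. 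This identifies $D$ with the image of $A$ in the induced representation $\pi_E$. Finally, $A$ is simple, so $\pi_E$ is faithful and hence isometric on $A$. Thus $\Phi_0$ extends to an isometric $*$-isomorphism $\Phi:A\to D$ that is the identity on $B$, giving $(B\subset A)\cong(B\subset B\rtimes^r_{(\alpha,\sigma)}W)$. If the density check via the Bures expansion turns out delicate, I would instead prove $\|\Phi_0(x)\|=\|x\|$ directly from the matrix-coefficient formula $\langle x\,u_h\otimes\xi,\,u_k\otimes\eta\rangle=\langle E(u_k^*xu_h)\xi,\eta\rangle$, which holds on both sides.
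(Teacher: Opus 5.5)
Your proposal is correct and follows essentially the same route as the paper, which also realizes $A$ in the representation induced from $E$ (there $\bigoplus_{\phi\in S(B)}\pi_{\phi\circ E}$), splits it as $\bigoplus_{g\in W}\pi_U(u_g)L$ using Lemma~\ref{impformainthm} and Theorem~\ref{mainthm}, and conjugates onto $\ell^2(W,L)$, only with mirrored conventions ($\alpha_g=\mathrm{Ad}\,u_{g^{-1}}^*$ and $u_g\mapsto\lambda_{g^{-1}}^*$). One bookkeeping fix is needed: with the paper's formula $(\pi_\alpha(b)\xi)(k)=\alpha_{k^{-1}}(b)\xi(k)$, your $U$ turns $b$ into multiplication by $\alpha_k^{-1}(b)=u_k^*bu_k$, which differs from $\alpha_{k^{-1}}(b)$ by $\mathrm{Ad}\,\sigma(k,k^{-1})$, so either use $\delta_k\otimes\xi\mapsto u_{k^{-1}}^*\otimes\xi$ instead (then $u_{k^{-1}}u_gu_{k^{-1}g}^*=\sigma(k^{-1},g)$ gives exactly $\lambda_g$), or work with the unitarily equivalent $\alpha_k^{-1}$-convention for the regular representation.
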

\begin{prf}
Let $\{u_g : g = [u_g] \in W\}$ be a fixed choice of (left) coset representatives of $W$ in $\mathcal{N}_A(B)$. Define $\alpha_g : B \to B$ by $\alpha_g(b) = u^*_{g^{-1}} b u_{g^{-1}}$. Then each $\alpha_g$ is an automorphism of $B$. We claim that the map $\alpha : W \to \mathrm{Aut}(B)$, given by $g \mapsto \alpha_g$, defines a cocycle action with respect to a $\mathcal{U}(B)$-valued cocycle $\sigma$, which we now construct.

Observe that $[u_g u_h] = [u_{gh}]$ for all $g,h \in W$. Therefore, there exists a function 
\begin{equation} \label{impinmainthm3}
\sigma : W \times W \to \mathcal{U}(B) \quad \text{ such that} \quad
u_g u_h = u_{gh} \sigma(h^{-1}, g^{-1})^*.
\end{equation}
It is a straightforward verification that $(\alpha, \sigma)$ satisfies the following identities for all $g, h, k \in W$:
\[
\alpha_g \circ \alpha_h = \sigma(g, h ) \alpha_{gh} \sigma(g,h)^*;\]
\[
\sigma (g, h) \sigma (gh, k) = \alpha_g(\sigma(h,k)) \sigma(g, hk);
\text{  and,}\]
\[
\sigma (g, e) = \sigma( e, g) = 1.
\]

Let $S(B)$ denote the state space of $B$. For each $\phi \in S(B)$, define $\tilde{\phi}$ on $A$ by $\tilde{\phi}(x) = \phi \circ E(x)$. Let $(\pi_{\tilde{\phi}}, H_{\tilde{\phi}})$ be the GNS representation of $A$ associated with $\tilde{\phi}$, and let $\{\eta^{\tilde{\phi}}_a : a \in A\}$ denote the corresponding dense subset of $H_{\tilde{\phi}}$. Define $H_U = \bigoplus_{\phi \in S(B)} H_{\tilde{\phi}}$ and 
$\pi_U = \bigoplus_{\phi \in S(B)} \pi_{\tilde{\phi}} : A \to B(H_U)$. 
Then $\pi_U$ is a faithful representation of $A$ by \cite{Pe}[Proposition 3.7.4], since $E$ is faithful and the set $\{\phi \circ E : \phi \in S(B)\}$ is a separating family of states of the $C^*$-algebra $A$.

Consider the subspace $L \subset H_U$ given by
$$L = \overline{\operatorname{span} \left\{ (\eta^{\tilde{\phi}}_{b})_{\phi \in S(B)} : b \in B,\ (\eta^{\tilde{\phi}}_{b}) \text{ is supported only at the } \phi\text{-coordinate} \right\}}.$$
Then for each $b \in B$, we have $\pi_U(b)L \subset L$, and $(\pi_U|_B, L)$ is a faithful non-degenerate representation of $B$. We now claim that 
$H_U = \bigoplus_{g \in W} \pi_U(u_g)L$. 

To prove this, we first show that if $g \neq h$, then $\pi_U(u_g)L \perp \pi_U(u_h)L$. It suffices to verify this for vectors in $L$ of the form $(\eta^{\tilde{\phi}}_{b})$, supported only at the $\phi$-coordinate, and $(\eta^{\tilde{\psi}}_{c})$, supported only at the $\psi$-coordinate. We compute:
\begin{eqnarray}\nonumber
\big\langle \pi_U(u_g)(\eta^{\tilde{\phi}}_{b}), \, \pi_U(u_h)(\eta^{\tilde{\psi}}_{c}) \big\rangle
&=& \delta_{\phi,\psi} \, \big\langle \pi_{\tilde{\phi}}(u_g)\eta^{\tilde{\phi}}_{b}, \, \pi_{\tilde{\phi}}(u_h)\eta^{\tilde{\phi}}_{c} \big\rangle.
\end{eqnarray}
If $\phi \neq \psi$, the inner product is zero. If $\phi = \psi$, then
\begin{eqnarray}\nonumber
\big\langle \pi_U(u_g)(\eta^{\tilde{\phi}}_{b}), \, \pi_U(u_h)(\eta^{\tilde{\phi}}_{c}) \big\rangle
&=& \tilde{\phi}(c^* u_h^* u_g b) \\
\nonumber
&=& \phi \circ E(c^* u_h^* u_g b) \\
\nonumber
&=& \phi\big(c^* E(u_h^* u_g) b\big) \\
\nonumber
&=& 0 \quad \text{(by Lemma \ref{impformainthm})}.
\end{eqnarray}
Thus, $\pi_U(u_g)L \perp \pi_U(u_h)L$ whenever $g \neq h$.

Now we claim that  $H_U \subset \bigoplus_{g \in W} \pi_U(u_g)L$. 
\smallskip

It is enough to show that a vector in $H_U$ of the form $(\eta^{\tilde{\phi}}_{a})$, supported only at the $\phi$-coordinate, belongs to $\bigoplus_{g \in W} \pi_U(u_g)L$.
Since $B \subset A$ is regular, Theorem \ref{mainthm} implies that every $a \in A$ admits a decomposition $a = \sum_{g \in W} u_g\, E(u_g^* a)= \sum_{g \in W} u_g\, a_g,,$ where $\{u_g\}$ is a fixed set of (left) coset representatives of $W$, and we set $a_g := E(u_g^* a) \in B,$ the sum converging in the Bures topology on $A''$ with respect to $E''$. For a finite subset $F \subset W$, set $a_F := \sum_{g \in F} u_g\, a_g$. Then $a_F \to a$ in the Bures topology. Consider the vectors $(\eta^{\tilde{\phi}}_{a_g})$, which are supported only at the $\phi$-coordinate. Then
\begin{eqnarray}\nonumber
    &&\left\|(\eta^{\tilde{\phi}}_{a}) - \sum_{g \in F} \pi_U(u_g)\big((\eta^{\tilde{\phi}}_{a_g})\big)\right\|^2\\\nonumber
    &=& \left\|(\eta^{\tilde{\phi}}_{a - \sum_{g\in F} u_g a_g})\right\|^2\\\nonumber
    &=& \phi \circ E\big((a - \sum_{g\in F} u_g a_g)^*(a - \sum_{g\in F} u_g a_g)\big)\\\nonumber
    &=& w_{(\eta^{\tilde{\phi}}_{1}), (\eta^{\tilde{\phi}}_{1})} \circ E\big((a - a_F)^*(a - a_F)\big).
\end{eqnarray}
Since $w_{(\eta^{\tilde{\phi}}_{1}),(\eta^{\tilde{\phi}}_{1})}
=
\big\langle \,\cdot\, (\eta^{\tilde{\phi}}_{1}), (\eta^{\tilde{\phi}}_{1})\big\rangle \in B''_*$, it follows that the right-hand side converges to $0$.
Therefore, $(\eta^{\tilde{\phi}}_{a}) \in \bigoplus_{g \in W} \pi_U(u_g)L$, which implies $H_U \subset \bigoplus_{g \in W} \pi_U(u_g)L.$
Consequently, $H_U = \bigoplus_{g \in W} \pi_U(u_g)L.$

Now define an isomorphism 
$T : H_U \to \ell^2(W, L)$ by
$$
T\big(\sum_{g \in W} \pi_U(u_g)\xi_g\big) = \xi, \quad \text{where } \xi : W \to L \text{ is given by } \xi(g) = \xi_g.
$$
Now, for each $a \in A$, consider the map given by $\Theta(\pi_U(a)) = T \pi_U(a) T^*$. We claim that 
$\Theta(\pi_U(b)) = \pi_{\alpha}(\pi_U(b))$ for all $b \in B$, and 
$\Theta(\pi_U(u_g)) = \lambda^*_{g^{-1}}$ for all $g \in W$. Let $b \in B$. Then, for some $\xi \in \ell^2(W, L)$, we have
\begin{eqnarray}\nonumber
   \Theta(\pi_U(b))(\xi)&=& T \pi_U(b) (\sum_{g \in W}\pi_U(u_g)\xi_g)\\\nonumber
   &=& T  (\sum_{g \in W}\pi_U(bu_g)\xi_g)\\\nonumber
   &=& T  (\sum_{g \in W}\pi_U(u_gu^*_gbu_g)\xi_g)\\\nonumber
   &=& T  (\sum_{g \in W}\pi_U(u_g)\pi_U(\alpha_{g^{-1}}(b))\xi_g)\\\nonumber
   &=& \pi_{\alpha}(\pi_U(b))\xi
\end{eqnarray}
On the other hand, for any $g \in W$ and for some $\xi \in \ell^2(W, L)$, we have
\begin{eqnarray}\nonumber
    \Theta(\pi_U(u_g))(\xi) &=& T \pi_U(u_g) (\sum_{h \in W}\pi_U(u_h)\xi_h)\\\nonumber
    &=& T (\sum_{h \in W}\pi_U(u_gu_h)\xi_h)\\\nonumber
     &=& T (\sum_{h \in W}\pi_U(u_{gh} \sigma(h^{-1}, g^{-1})^*)\xi_h) \quad \text{ (by \cref{impinmainthm3})}\\\label{inmainthm}
     &=& T (\sum_{k \in W}\pi_U(u_{k}) \pi_U(\sigma(k^{-1}g, g^{-1})^*)\xi_{g^{-1}k}).
\end{eqnarray}
By by Remark \ref{impinmainthm6}$(ii)$, we know that $\lambda^*_{g^{-1}}=\pi_{\alpha}(\pi_U({\sigma(g,g^{-1})}^*))\lambda_g$. Hence, for some $\xi \in \ell^2(W, L)$, we have
\begin{eqnarray}\nonumber
    \pi_{\alpha}(\pi_U({\sigma(g,g^{-1})}^*))\lambda_g(\xi)(k)&=&\pi_U(\alpha_{k^{-1}}({\sigma(g,g^{-1})}^*))(\lambda_g(\xi)(k))\\\nonumber
    &=&\pi_U(\alpha_{k^{-1}}({\sigma(g,g^{-1})}^*))(\pi_U(\sigma(k^{-1},g))(\xi_{g^{-1}k}))\\\nonumber
    &=&\pi_U({\alpha_{k^{-1}}({\sigma(g,g^{-1})})}^*\sigma(k^{-1},g))(\xi_{g^{-1}k})\\\nonumber
    &=&\pi_U({{\sigma(k^{-1},e)}}{\sigma(k^{-1}g,g^{-1})}^*)(\xi_{g^{-1}k}) \quad \text{(by Remark \ref{impinmainthm6}$(i)$)}\\\label{inmainthm2} 
    &=&\pi_U({\sigma(k^{-1}g,g^{-1})}^*)(\xi_{g^{-1}k}).
\end{eqnarray}
Thus, from \cref{inmainthm} and \cref{inmainthm2}, we have $\Theta(\pi_U(u_g)) = \lambda^*_{g^{-1}}$ for all $g \in W$. Moreover, since $\Theta$ is an isometric $*$-algebra homomorphism, it follows that $\Theta(A) \subseteq B \rtimes^r_{(\alpha,\sigma)} W$. Now, since $A$ is the $C^*$-algebra generated by $\{u_g : g \in W\}$ and $B$, and $B \rtimes^r_{(\alpha, \sigma)} W$ is the $C^*$-algebra generated by $\{\pi_{\alpha}(\pi_U(b)) : b \in B\}$ and $\{\lambda_g : g \in W\}$, the surjectivity follows from the fact that $\Theta$ sends dense sets to dense sets. This completes the proof.
\qed
\end{prf}

\medskip
By \cite[Theorem 4.4]{CaSm2}, if $B$ is a unital simple $C^*$-algebra, $G$ is a discrete group, and $(B,G,\alpha,\sigma)$ is a twisted dynamical system such that $\alpha_g$ is outer for every $g \neq e$, then every intermediate $C^*$-algebra of
$A \subseteq B \rtimes^{\sigma}_{\alpha,r} G$ is of the form $B \rtimes^{\sigma}_{\alpha,r} H,$ where $H$ is a subgroup of $G$. As a consequence of Theorem~\ref{characterization-irreducible-regular}, we obtain the following corollary:
\begin{crlre}
Let $B \subset A$ be a regular, irreducible, unital inclusion of simple $C^*$-algebras, and let there exist a conditional expectation from $A$ onto $B$. Then, for every intermediate $C^*$-algebra $C$, the inclusion $B \subset C$ is regular.
\end{crlre}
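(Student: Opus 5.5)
The plan is to transport the problem to the crossed product picture supplied by Theorem~\ref{characterization-irreducible-regular} and then invoke the Cameron--Smith intermediate-subalgebra theorem \cite[Theorem 4.4]{CaSm2}.

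First, Theorem~\ref{characterization-irreducible-regular} gives an outer cocycle action $(\alpha,\sigma)$ of the Weyl group $W$ on $B$ and an isomorphism $\Theta$ with $(B\subset A)\cong(B\subset B\rtimes^r_{(\alpha,\sigma)}W)$. Here $\Theta$ is the identity on $B$ and sends $u_g$ to $\lambda^*_{g^{-1}}$, which up to a unitary of $B$ is $\lambda_g$. Outerness of $\alpha_g$ for $g\neq e$ is exactly Lemma~\ref{impformainthm}, since $\alpha_g=\mathrm{Ad}_{u_{g^{-1}}^*}$ with $u_{g^{-1}}\notin\mathcal U(B)$. Because $\Theta$ is an isomorphism fixing $B$, intermediate $C^*$-algebras $B\subset C\subset A$ correspond bijectively to intermediate algebras of $B\subset B\rtimes^r_{(\alpha,\sigma)}W$. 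Regularity is also preserved, since $\Theta$ maps $\mathcal N_C(B)$ onto $\mathcal N_{\Theta(C)}(B)$. Since $B$ is simple and unital, \cite[Theorem 4.4]{CaSm2} applies and gives $\Theta(C)=B\rtimes^r_{(\alpha,\sigma)}H$ for some subgroup $H\le W$. Pulling back, $C$ is the $C^*$-algebra generated by $B$ and $\{u_h:h\in H\}$.

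Next, I would show that $B\subset C$ is regular. For $h\in H$, the unitary $u_h$ lies in $C$ and normalizes $B$ because it lies in $\mathcal N_A(B)$, so $u_h\in\mathcal N_C(B)$. Also $\mathcal U(B)\subset\mathcal N_C(B)$. Therefore the $C^*$-algebra generated by $\mathcal N_C(B)$ contains both $B$ and all $u_h$ with $h\in H$. Since these generate $C$, it equals $C$, which is regularity.

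The only step needing care is the identification of $\Theta(C)$ with a sub-crossed product whose canonical unitaries come from the coset representatives $u_h$. This requires checking that the description ``$B\rtimes^r H$ is generated by $\pi_\alpha(B)$ and $\lambda_h$, $h\in H$'' matches the images $\Theta(u_h)$. That holds because $\Theta(u_h)=\lambda^*_{h^{-1}}=\pi_\alpha(\sigma(h,h^{-1})^*)\lambda_h$ by Remark~\ref{impinmainthm6}$(ii)$, which differs from $\lambda_h$ by a unitary in $\pi_\alpha(B)$. Apart from this check, the argument is formal.
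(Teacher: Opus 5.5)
Your proposal is correct and follows the route the paper intends. You transport $B\subset C\subset A$ to $B\subset B\rtimes^r_{(\alpha,\sigma)}W$ via the characterization theorem, apply the Cameron--Smith correspondence to get $\Theta(C)=B\rtimes^r_{(\alpha,\sigma)}H$, and observe that this is generated by $B$ and the normalizing unitaries $u_h$, $h\in H$; your explicit checks of outerness of $\alpha_g$ and of $\Theta(u_h)=\lambda^*_{h^{-1}}\in\pi_\alpha(\mathcal{U}(B))\lambda_h$ just fill in details the paper leaves implicit.
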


\brmrk While the recent work of Palomares and Nelson \cite{PaN} provides a comprehensive categorical characterization of discrete inclusions using unitary tensor categories (UTCs), our approach focuses specifically on the group-theoretic structure of regular irreducible inclusions. We demonstrate that under the assumption of regularity, the symmetry is fully captured by the Weyl group $W$ acting as a reduced twisted crossed product. Unlike the categorical framework, our use of a generalized quasi-basis in the Bures topology allows for a concrete Fourier-like expansion of elements in $A$ directly via the Weyl group representatives. This provides a direct $C^{*}$-algebraic analog to the classical factor results of Choda (\cite{Ch2}), extending the structural theory beyond the finite-index regime without requiring the abstract machinery of tensor categories.\ermrk
We conclude this paper with few questions.
\medskip

\textbf{Question 1 (Crossed product realization).} 
Let $B \subset A$ be a regular unital inclusion of simple $C^*$-algebras admitting a faithful conditional expectation $E : A \to B$. Is it possible to characterize such inclusions in terms of group-theoretic data? More precisely, does there exist a discrete group $G$ and a (possibly twisted) action $(\alpha,\sigma)$ of $G$ on the $C^*$-algebra 
\[
\mathcal{C} = C^*\{B,\, B' \cap A\}
\]
such that
\[
A \cong \mathcal{C} \rtimes_r^{(\alpha,\sigma)} G?
\]

\medskip

\noindent
\textbf{Question 2 (Unitary orthonormal basis in the nonirreducible case).} 
Let $B \subset A$ be a regular (not necessarily irreducible) unital inclusion of simple $C^*$-algebras admitting a faithful conditional expectation $E : A \to B$. Does $E$ admit a unitary orthonormal basis?

\medskip
\section*{Acknowledgements}
The first author acknowledges the support of the grant ANRF/ECRG/2024/002328/PMS.

\bigskip

\noindent {\em Department of Mathematics and Statistics}\\
{\em Indian Institute of Technology Kanpur}\\
{\em Uttar Pradesh $208016$, India}
\medskip

\noindent {Keshab Chandra Bakshi:} bakshi209@gmail.com, keshab@iitk.ac.in\\
{Silambarasan C:} silamc23@iitk.ac.in\\
{Biplab Pal:} biplabpal32@gmail.com, bpal21@iitk.ac.in,

\end{document}